\documentclass[12pt]{article}
\usepackage{amsmath, amsthm, amssymb}

\raggedbottom 
\textwidth 6.1in 
\oddsidemargin .12in 
\evensidemargin .25in 
\textheight 8.4in 
\topmargin 0in

\newtheorem{theorem}{Theorem}[section]
\newtheorem*{theorem:mcg}{Theorem~\ref{theorem:mcg}}
\newtheorem*{theorem:main}{Theorem~\ref{theorem:main}}
\newtheorem*{theorem:main2}{Theorem~\ref{theorem:main2}}
\newtheorem{lemma}[theorem]{Lemma}
\newtheorem{corollary}[theorem]{Corollary}
\newtheorem{conjecture}[theorem]{Conjecture}
\newtheorem{proposition}[theorem]{Proposition}
\theoremstyle{definition}
\newtheorem{definition}[theorem]{Definition}

\newtheorem{question}[theorem]{Question}

\theoremstyle{remark}
\newtheorem{remark}[theorem]{Remark}
\numberwithin{equation}{section}

\title{$C^1$ actions of the mapping class group on the circle}
\author{Kamlesh Parwani}

\begin{document}

\maketitle

\begin{abstract}
Let $S$ be a connected orientable surface with finitely many punctures, finitely many boundary components, and genus at least $6.$ Then any $C^1$ action of the mapping class group of $S$ on the circle is trivial.

The techniques used in the proof of this result permit us to show that products of Kazhdan groups and certain lattices cannot have $C^1$ faithful actions on the circle. We also prove that for $n  \geq 6$, any $C^1$ action of $Aut(F_n)$ or $Out(F_n)$ on the circle factors through an action of $\mathbb{Z}/2\mathbb{Z}.$ 
\end{abstract}

\section{Introduction} 

\begin{theorem} \label{theorem:mcg}
Let $S$ be a connected orientable surface with finitely many punctures, finitely many boundary components, and genus at least $6.$ Then any $C^1$ action of the mapping class group of $S$ on the circle is trivial.
\end{theorem}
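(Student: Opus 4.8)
The plan is to exploit two structural features of high-genus mapping class groups — perfectness, and, for $g\geq 6$, the presence of two commuting copies of a perfect mapping class group coming from disjoint subsurfaces — together with Thurston's stability theorem and the rigidity of centralizers of sufficiently chaotic circle actions.

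First I would reduce to an orientation-preserving action. Since $S$ has genus $\geq 3$, $G:=\mathrm{Mod}(S)$ is perfect, i.e. $H_1(G;\mathbb Z)=0$, independently of the number of punctures and boundary components (the lantern and chain relations kill every Dehn twist in the abelianization once $g\geq 3$). Hence $G$ admits no surjection onto $\mathbb Z/2\mathbb Z$, so any $C^1$ action $G\to\Diff^1(S^1)$ has image in the orientation-preserving subgroup; write it $\rho\colon G\to\Diff^1_+(S^1)$. Because $g\geq 6$, I can cut $S$ along a separating simple closed curve into subsurfaces $S_1,S_2$ each of genus $\geq 3$ (distributing punctures and boundary arbitrarily); extension by the identity yields subgroups $G_1,G_2\leq G$, each a quotient of $\mathrm{Mod}(S_i)$ and hence perfect, with $[G_1,G_2]=1$, and with each $G_i$ containing the Dehn twist about a curve that is non-separating in $S$. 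The goal is then to show that one of $G_1,G_2$ lies in $\ker\rho$: since the normal closure in $\mathrm{Mod}(S)$ of a twist about a non-separating curve is all of $\mathrm{Mod}(S)$ (non-separating curves form a single orbit and their twists generate — for surfaces with boundary one absorbs the boundary twists via the lantern relation), this forces $\rho$ to be trivial.

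Next I would apply the Ghys trichotomy to $\rho(G_1)$. If $\rho(G_1)$ has a finite orbit $F$, then, preserving the cyclic order of $F$, its action on $F$ factors through $\mathbb Z/|F|\mathbb Z$, which is trivial as $H_1(G_1)=0$; thus $\rho(G_1)$ has a global fixed point $p$, and Thurston's stability theorem applied along $S^1\setminus\{p\}$ — together with a maximality argument on the interval where $\rho(G_1)$ is the identity — forces $\rho(G_1)=1$, since otherwise one would extract a nontrivial homomorphism $G_1\to(\mathbb R,+)$. If instead $\rho(G_1)$ is minimal or has a minimal Cantor set, I would first note that it preserves no probability measure: an invariant measure yields a rotation-number homomorphism $G_1\to\mathbb R/\mathbb Z$, trivial by perfectness, and an invariant measure with all rotation numbers zero forces a finite orbit (collapse the complementary intervals of its support), contradicting minimality. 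Then I would invoke the lemma that the centralizer in $\Homeo_+(S^1)$ of a group acting minimally on its minimal set without an invariant probability measure is finite cyclic: anything commuting with it preserves the minimal set, and if it fixes one point of that set it fixes a dense subset, hence the whole minimal set, hence — using minimality to move the complementary gaps around — everything; so the centralizer acts freely, is abelian, and is pinned down by its injective rotation-number homomorphism, whose image cannot be dense, lest one manufacture an invariant measure for $\rho(G_1)$. Since $\rho(G_2)$ lies in this centralizer, it is finite cyclic, hence trivial by perfectness of $G_2$, so $G_2\leq\ker\rho$. Either way the argument of the previous paragraph concludes the proof.

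I expect the centralizer lemma to be the main point requiring care, especially in the minimal-Cantor case: one must rule out a homeomorphism commuting with $\rho(G_1)$ that is the identity on the Cantor minimal set yet nontrivial on a complementary gap, and one must make precise the step in which a dense centralizer produces an invariant probability measure for $\rho(G_1)$. The remaining ingredients — perfectness of $\mathrm{Mod}(S)$ for $g\geq 3$, Thurston's stability theorem, generation of $\mathrm{Mod}(S)$ by twists about non-separating curves, and the normal-closure statement — are standard.
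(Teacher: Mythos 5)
Your overall architecture (cut $S$ along a separating curve into two genus $\geq 3$ pieces, use perfectness to force orientation preservation and to kill rotation numbers, show one commuting factor dies, then propagate triviality through the conjugacy class of twists about non-separating curves) matches the paper exactly. Where you genuinely diverge is the key dichotomy: the paper invokes the theorem of Deroin--Kleptsyn--Navas that a $C^1$ action with no invariant probability measure contains an element with finitely many (hyperbolic) fixed points, and then hands that finite invariant set to the other factor, which therefore preserves a measure and is killed by rotation numbers plus Thurston stability. You instead run the Ghys trichotomy on $\rho(G_1)$ and, in the non--finite-orbit cases, appeal to rigidity of centralizers of minimal actions without invariant measure. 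That is a legitimately more elementary route in the sense that it replaces a hard $C^1$ dynamical theorem with classical $C^0$ arguments (H\"older's theorem, unique ergodicity of irrational rotation number), reserving the $C^1$ hypothesis for Thurston stability alone --- which is where it must appear, since Thurston's faithful $C^0$ actions exist.

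The one genuine gap is the one you flag yourself, and it is a real failure of your centralizer lemma as stated, not just a detail: when $\rho(G_1)$ has an exceptional minimal set $K$, an element $h$ commuting with $\rho(G_1)$ and fixing $K$ pointwise need \emph{not} be the identity. If some gap $I$ of $K$ has trivial stabilizer in $\rho(G_1)$, choose any homeomorphism of $\overline{I}$ fixing its endpoints, propagate it equivariantly over the orbit of $I$ (the translates of $I$ are disjoint, so their lengths tend to $0$ and the resulting map is continuous), and extend by the identity elsewhere; this produces a nontrivial centralizing homeomorphism supported off $K$, so the centralizer can even contain torsion-free subgroups and your "finite cyclic, hence trivial by perfectness" step breaks. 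The repair is available with tools you already use: the centralizer preserves $K$ and maps to the centralizer of the collapsed minimal action, where your H\"older/unique-ergodicity argument correctly shows the image is torsion abelian; since $G_2$ is perfect, $\rho(G_2)$ therefore lies in the kernel, i.e.\ fixes $K$ pointwise. Now $\rho(G_2)$ has a global fixed point, and Thurston stability together with $H^1(G_2,\mathbb{R})=0$ forces $\rho(G_2)=1$. (In the minimal case your argument is fine; note only that "abelian" already suffices against perfectness --- you do not need the literal finite cyclicity of the centralizer.) With that patch your proof is complete and genuinely independent of the Deroin--Kleptsyn--Navas theorem.
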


Thurston has established that there are faithful (effective) $C^0$ actions of the mapping class group of $S$ on the circle when $S$ is a surface of negative Euler characteristic with nonempty boundary (see \cite{SW} or \cite{Ghys}). Dehornoy in \cite{Dehornoy}, using different techniques, had previously shown that braid groups (mapping class groups of the punctured sphere) act faithfully on the circle. The fact that mapping class groups act on the circle had been observed earlier by Nielsen in his clasical works. Theorem~\ref{theorem:mcg} above asserts that nontrivial actions do not exist under the smoothness assumption when the genus is sufficiently large.

In fact, any $C^0$ action of a finitely generated group on the circle is (topologically) conjugate to a Lipschitz action (see \cite{DNK}).  So the known actions may be considered to be Lipschitz, and therefore, can be assumed to be differentiable almost everywhere.  The result above shows that when the genus is at least $6$, the actions of mapping class groups are not smoothable, that is, they are not conjugate to $C^1$ actions.

Theorem~\ref{theorem:mcg} provides an infinite family of examples that answer the following question posed by John Franks.

\begin{question}[Franks \cite{Franks}]
Does there exists a finitely generated group which acts faithfully on the circle via homeomorphisms but for which there is no faithful $C^1$ action?
\end{question}

Note that another example that answers this question has been constructed  in \cite{Calegari}.
Theorem~\ref{theorem:mcg} also generalizes the following result in \cite{Farb&Franks}.

\begin{theorem} [Farb and Franks]
Let $S$ be compact surface of genus at least $3$ and at most one puncture. Then any $C^2$ action of the mapping class group of $S$ on the circle must be trivial.
\end{theorem}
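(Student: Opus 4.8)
The plan is to marry the algebraic rigidity of $\mathrm{Mod}(S)$ in the range $g\geq 3$ with the dynamical rigidity of $C^2$ circle diffeomorphisms, organizing everything around one reduction: \emph{the action is trivial the moment it has a global fixed point}. The algebraic input I would establish first is that $\mathrm{Mod}(S)$ is perfect for $g\geq 3$: the lantern relation writes every Dehn twist as a product of commutators, so $H_1(\mathrm{Mod}(S);\mathbb{Z})=0$ and there is no nontrivial homomorphism from $\mathrm{Mod}(S)$ to any abelian group. (This is exactly where $g\geq 3$ is forced: for $g\leq 2$ the abelianization is a nontrivial finite cyclic group, which acts on $S^1$ by finite-order rotations, so the theorem genuinely fails there.) I also use that $\mathrm{Mod}(S)$ is generated by finitely many Dehn twists. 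Granting a global fixed point $p$ of a $C^1$ action $\phi$, I would show the common fixed set $F=\mathrm{Fix}(\phi(\mathrm{Mod}(S)))$ is all of $S^1$: $F$ is closed, and at any $q\in F$ Thurston's stability theorem applied on each side of $q$ says that a nontrivial germ of a finitely generated $C^1$ action fixing $q$ yields a nontrivial homomorphism to $(\mathbb{R},+)$, which perfectness forbids; hence every generator is the identity near $q$, so $F$ is open, and a nonempty clopen subset of the connected circle is everything. Finally, a finite orbit already suffices: an orientation-preserving action on a finite orbit of size $k$ preserves its cyclic order, giving a homomorphism $\mathrm{Mod}(S)\to\mathbb{Z}/k$ that perfectness kills, so the orbit is fixed pointwise. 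Thus the whole problem reduces to producing a finite orbit, and this reduction uses only $C^1$ regularity.

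The crux is producing that finite orbit, and this is precisely where the $C^2$ hypothesis enters. Using Ghys's trichotomy, the action either has a finite orbit (done), is minimal, or admits an exceptional (Cantor) minimal set. In the exceptional case I would invoke Sacksteder's theorem — a genuinely $C^2$ statement — to produce an element of $\phi(\mathrm{Mod}(S))$ with a hyperbolically contracting fixed point $q$; then, since $g\geq 3$ supplies many curves disjoint from a fixed one, the correspondingly large centralizer of a Dehn twist must commute with this element, and Kopell's lemma (again $C^2$) forces these commuting diffeomorphisms to share fixed points and in particular to fix $q$, producing a finite orbit. In the minimal case I would run the invariant-measure dichotomy: either an invariant probability measure exists, whose rotation-number homomorphism $\mathrm{Mod}(S)\to\mathbb{R}/\mathbb{Z}$ is killed by perfectness and, through the resulting semiconjugacy, forces a finite orbit; or no invariant measure exists, and $C^2$ control of the dynamics (via Denjoy's theorem on individual diffeomorphisms together with the same Sacksteder/Kopell mechanism) again yields contracting elements to be handled as in the exceptional case.

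The main obstacle is exactly this fixed-point production: $\mathrm{Mod}(S)$ is \emph{non-amenable}, so no invariant measure is handed to us for free, and the entire burden falls on the $C^2$ machinery — Sacksteder's theorem, Kopell's lemma, and Denjoy-type regularity results, all of which fail in $C^1$ — together with the geometric abundance of $g\geq 3$ (enough disjoint curves to make centralizers large and the lantern relation available). The reduction steps are robust and purely $C^1$; it is this single step that consumes the $C^2$ hypothesis, and it is precisely the step that must be replaced by a substantially stronger argument to reach the $C^1$ conclusion in higher genus asserted in Theorem~\ref{theorem:mcg}.
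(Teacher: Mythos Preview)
The paper does not prove this theorem; it is stated as a result of Farb and Franks, cited from \cite{Farb&Franks}, purely to situate Theorem~\ref{theorem:mcg} as a generalization. There is no proof in the present paper to compare your proposal against.

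On your proposal itself: the reduction steps --- perfectness for $g\geq 3$, Thurston stability turning a global fixed point into triviality, and a finite orbit upgrading to a global fixed point via the trivial homomorphism to $\mathbb{Z}/k$ --- are correct. The gap is exactly where you locate it, and it is genuine. Sacksteder's theorem produces \emph{some} element $g$ with a hyperbolic fixed point, but $g$ is an uncontrolled word in the group, not a Dehn twist; the phrase ``the correspondingly large centralizer of a Dehn twist must commute with this element'' conflates $g$ with a Dehn twist and does not follow. To invoke Kopell you need two commuting $C^2$ diffeomorphisms, one of which already has an isolated fixed point, and you have not exhibited such a pair. In the minimal-with-no-invariant-measure branch the problem is worse: Sacksteder's theorem concerns exceptional minimal sets and simply does not apply, so ``the same Sacksteder/Kopell mechanism'' is unavailable there. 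A workable $C^2$ argument runs in the opposite order: start from \emph{specific} Dehn twists about disjoint curves, use that rotation number is additive on commuting homeomorphisms together with the relations in $Mod(S)$ (and Denjoy's theorem to exclude the irrational case) to force individual Dehn twists to have periodic points, and only then apply Kopell's lemma to those explicit commuting pairs to propagate common fixed points. Your ingredients are the right ones, but as assembled the decisive step has a hole.
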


The main results of this paper are the following two theorems.

\begin{theorem} \label{theorem:main}
Let $H$ and $G$ be two finitely generated groups such that $H_1(G, \mathbb{Z}) = H_1(H, \mathbb{Z}) = 0.$ Then for any $C^1$ action of $H \times G$ on the circle, either $H \times 1$ acts trivially or $1 \times G$ acts trivially.
\end{theorem}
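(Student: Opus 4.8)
The plan is to play two standard facts against the perfectness hypothesis $H_1(H,\mathbb Z)=H_1(G,\mathbb Z)=0$. The first is Thurston's $C^1$ stability theorem: a finitely generated group acting on an interval by $C^1$ diffeomorphisms and fixing the two endpoints either acts trivially or admits a nontrivial homomorphism to $\mathbb R$. The second is the structure theory of circle actions: a group acting on $S^1$ either has a finite orbit, or acts minimally, or has a unique exceptional minimal (Cantor) set; and, by a theorem of Ghys, a minimal action not topologically conjugate to an action by rotations has finite cyclic centralizer in $\mathrm{Homeo}_+(S^1)$. Since $H$ and $G$ are perfect they admit no nontrivial homomorphism to any abelian group, so every homomorphism that the structure theory produces along the way — to $\mathbb R$, to $\mathbb Z/k\mathbb Z$, to $\mathbb R/\mathbb Z$, and to $\mathbb Z/2\mathbb Z$ — must be trivial. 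This is the leverage that turns topological structure into outright triviality.

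Fix a $C^1$ action $\rho$ of $H\times G$ on $S^1$. It is orientation-preserving, since the orientation character is a homomorphism from the perfect group $H\times G$ to $\mathbb Z/2\mathbb Z$. Suppose for contradiction that neither $\rho(H\times 1)$ nor $\rho(1\times G)$ is trivial. I first claim $\rho(H\times 1)$ has no finite orbit. If it had one, then $H$ would permute its points preserving their cyclic order, hence through a homomorphism to a finite cyclic group, hence trivially, so $\mathrm{Fix}(\rho(H))\neq\emptyset$; then $H$ would preserve each complementary interval of $\mathrm{Fix}(\rho(H))$ and act on its closure by $C^1$ diffeomorphisms fixing the endpoints, and Thurston stability together with perfectness would make this action trivial on every such interval, forcing $\rho(H)$ trivial — a contradiction. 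By the same argument, neither $\rho(H\times 1)$ nor $\rho(1\times G)$ has a finite orbit.

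By the trichotomy, $\rho(H)$ has a unique minimal set $\Lambda$ (all of $S^1$, or a Cantor set). Collapsing the closures of the complementary intervals of $\Lambda$ yields a monotone degree-one map $\pi\colon S^1\to\overline S^1$ and a minimal action $\overline\rho(H)$ on $\overline S^1$. Because $G$ commutes with $H$, each $\rho(g)$ carries $\Lambda$ to an $H$-invariant minimal set, hence to $\Lambda$ itself; thus $\rho(g)$ preserves $\Lambda$, permutes its gaps, and descends through $\pi$ to a homeomorphism of $\overline S^1$ commuting with $\overline\rho(H)$. Now $\overline\rho(H)$ cannot be conjugate to an action by rotations: that would force $\overline\rho(H)$ trivial by perfectness, whence $\rho(H)$ would fix $\Lambda$ pointwise and preserve each gap, and a further application of Thurston stability to the gaps would make $\rho(H)$ trivial — excluded. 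Hence by Ghys's theorem the centralizer of $\overline\rho(H)$ in $\mathrm{Homeo}_+(\overline S^1)$ is finite cyclic, and, $G$ being perfect, $\overline\rho(G)$ maps into it trivially. Therefore every $\rho(g)$ fixes $\Lambda$ pointwise and preserves each complementary interval, and Thurston stability applied to the $C^1$ action of $G$ on the closure of each such interval shows $\rho(G)$ is trivial there, hence trivial on all of $S^1$ — contradicting our assumption. This proves the theorem.

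The $C^1$ hypothesis enters only through Thurston stability, which is invoked to trivialize actions on intervals with fixed endpoints; the rest is purely topological, and the product structure is used only to run the centralizer argument on the minimal model. The step I expect to require the most care is precisely that passage to $\overline S^1$: one must check that the collapsing map is well defined, that the commuting $G$-action genuinely descends and still commutes there, and — once Ghys's theorem kills $\overline\rho(G)$ — that the only remaining freedom for $G$ lives inside the complementary intervals of $\Lambda$, where Thurston stability can finish the job. One should also note that there may be infinitely many such intervals, but Thurston stability is applied to each of them separately and $H,G$ are fixed finitely generated groups, so no uniformity is needed.
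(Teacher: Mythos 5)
Your argument is correct, but it takes a genuinely different route from the one in the paper. The paper's proof rests on the theorem of Deroin, Kleptsyn and Navas: for a $C^1$ action, either $G$ preserves a probability measure, or some $g\in G$ has a nonempty finite set of hyperbolic fixed points. In the first case the mean rotation number homomorphism is trivial by perfectness, the support of the measure consists of global fixed points, and Thurston stability kills the action of $G$; in the second case the finite set $\mathrm{Fix}(g)$ is invariant under the commuting group $H$, so $H$ preserves a finitely supported measure and the first case runs for $H$. You replace the DKN dichotomy by the purely topological trichotomy (finite orbit / minimal / exceptional minimal set) together with Ghys's theorem that the centralizer in $\mathrm{Homeo}_+(S^1)$ of a minimal action not conjugate to rotations is finite cyclic; the commuting factor then lands in that centralizer and dies by perfectness, and Thurston stability finishes on the gaps of the minimal set exactly as the paper finishes at global fixed points. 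Both proofs exploit the product structure in the same spirit --- a canonical dynamical object attached to one factor (a finite hyperbolic fixed-point set, resp.\ the unique minimal set) is automatically preserved by the other factor --- but yours uses $C^1$ only through Thurston stability, so it is more self-contained on the dynamical side, avoiding the deep DKN theorem in favor of classical $C^0$ facts. The trade-offs: you must quote Ghys's centralizer theorem (it is in the survey \cite{Ghys}, already in the bibliography) and check the routine facts about collapsing the exceptional minimal set, whereas the paper's case analysis is shorter once DKN is granted; and the paper's measure-theoretic formulation adapts immediately to the cohomological variant (Theorem~\ref{theorem:main2}) via periodic orbits and finite-index kernels. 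One small point of hygiene: when you apply Thurston stability on a closed gap, note, as the paper does in Lemma~\ref{lemma:trivial}, that perfectness first forces the derivative at the fixed endpoint to equal $1$, so the hypothesis $Dg(x)=\mathrm{id}$ of Theorem~\ref{theorem:Thurston} is satisfied before the absence of homomorphisms to $\mathbb{R}$ yields triviality.
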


\begin{theorem} \label{theorem:main2}
Let $H$ and $G$ be two finitely generated groups such that $H^1(G_0,\mathbb{R}) = H^1(H_0,\mathbb{R}) = 0$ for all finite index normal subgroups $H_0$ and $G_0$ of $H$ and $G$ respectively. Then for any $C^1$ action of $H \times G$ on the circle, the induced action of either $H \times 1$ or $1 \times G$ factors through an action of a finite group.
\end{theorem}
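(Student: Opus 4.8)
The plan is to follow the scheme of Theorem~\ref{theorem:main}, weakening ``acts trivially'' to ``factors through a finite group,'' and to confine the $C^1$ hypothesis to a single Thurston--stability argument. First some bookkeeping. Taking $H_0=H$ (resp.\ $G_0=G$) in the hypothesis gives $H^1(H,\mathbb{R})=H^1(G,\mathbb{R})=0$, i.e.\ $\mathrm{Hom}(H,\mathbb{R})=\mathrm{Hom}(G,\mathbb{R})=0$; since $H$ and $G$ are finitely generated, $H^{\mathrm{ab}}$ and $G^{\mathrm{ab}}$ are therefore finite. Moreover, if $\Delta$ is \emph{any} finite-index subgroup of $H$, its normal core $N$ is finite-index and normal in $H$, so $H^1(N,\mathbb{R})=0$ by hypothesis; since the transfer makes the restriction $H^1(\Delta,\mathbb{R})\to H^1(N,\mathbb{R})$ injective, $H^1(\Delta,\mathbb{R})=0$ as well, and the same holds for $G$. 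Hence we may freely replace $H$ or $G$ by a finite-index subgroup throughout, and the differentiability assumption will be used only in Lemma~A below, which lives on the original circle.

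Two lemmas underlie the argument. \textbf{Lemma A} (the $C^1$ input): if $\Delta$ is finitely generated with $\mathrm{Hom}(\Delta,\mathbb{R})=0$ and acts on $S^1$ by $C^1$ diffeomorphisms fixing pointwise a nonempty closed set $K$, then $\Delta$ acts trivially. Indeed, on the closure $[a,b]$ of a complementary interval of $K$, if $\Delta$ acts nontrivially, pick a complementary interval $(c,d)$ of $\mathrm{Fix}(\Delta)\cap[a,b]$; then $\Delta$ has no common fixed point in $(c,d)$, so, being finitely generated, its one-sided germ at $c$ is nontrivial, and Thurston's stability theorem produces a nonzero homomorphism $\Delta\to\mathbb{R}$, a contradiction; hence $\Delta$ fixes each such $[a,b]$, and thus all of $S^1$. \textbf{Lemma B} (the topological input): if $\Delta\le\Homeo_+(S^1)$ acts minimally and preserves no probability measure, then its centralizer in $\Homeo_+(S^1)$ is trivial. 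Indeed, by the Margulis--Ghys analysis of measure-free circle actions there are $h_n\in\Delta$ and points $a,c$ with $h_n\to c$ uniformly on compact subsets of $S^1\setminus\{a\}$; if $g$ commutes with $\Delta$, comparing $g h_n(x)=g(h_n(x))\to g(c)$ with $g h_n(x)=h_n(g(x))\to c$ forces $g(c)=c$, and replacing $h_n$ by $h h_n$ shows $g$ fixes $h(c)$ for every $h\in\Delta$; this orbit is dense by minimality, so $g=\mathrm{id}$.

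Now the assembly; examine the induced action of $H$ on $S^1$. If $H$ has a finite orbit $F$, then $H_0:=\ker(H\to\mathrm{Sym}(F))$ is finite-index, normal, and fixes $F$ pointwise, so by Lemma~A it acts trivially and the $H$-action factors through the finite group $H/H_0$; similarly if $G$ has a finite orbit. So assume neither $H$ nor $G$ has a finite orbit; then the $H$-action has a unique minimal set $\Lambda$. If $\Lambda=S^1$, the $H$-action is minimal and cannot preserve a probability measure --- otherwise it would be conjugate to a group of rotations with finite image (as $H^{\mathrm{ab}}$ is finite), which cannot act minimally --- so by Lemma~B, and since the $G$-action centralizes the $H$-action, $G$ acts trivially. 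If instead $\Lambda\subsetneq S^1$, then $\Lambda$ is a Cantor set, and since $G$ commutes with $H$ it permutes the minimal sets of the $H$-action and hence preserves $\Lambda$; collapsing the closures of the complementary intervals of $\Lambda$ produces a circle $\overline{S^1}$ on which $H\times G$ acts, with the image $\overline{H}$ of $H$ acting minimally. Again $\overline{H}$ preserves no probability measure (else, having finite abelianization, it would be a nontrivial finite rotation group, which cannot be minimal), so by Lemma~B the image of $G$ in $\Homeo_+(\overline{S^1})$ is trivial; hence $G$ fixes $\Lambda$ pointwise, and Lemma~A shows that $G$ acts trivially on $S^1$. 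In every case one of the factors acts through a finite group.

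The step I expect to be the main obstacle is Lemma~B: pinning down and correctly invoking the precise ``proximality'' statement --- that a minimal circle action without invariant measure admits contracting sequences $h_n\to c$ --- and, relatedly, checking in the Cantor case that nothing is lost because the collapsed circle $\overline{S^1}$ carries only a topological (not $C^1$) action, which is harmless precisely because differentiability enters only through Lemma~A on the original circle.
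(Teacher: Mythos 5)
Your route is genuinely different from the paper's: instead of the Deroin--Kleptsyn--Navas theorem (which the paper uses to produce, in the absence of an invariant measure, an element with a finite set of hyperbolic fixed points, a set which is then invariant under the commuting factor and so supports an invariant measure for it), you run the classical minimal-set analysis and rely on a centralizer statement for minimal measure-free actions, confining the $C^1$ hypothesis to Thurston stability. Most of this is sound: the normal-core/transfer bookkeeping, Lemma~A, the finite-orbit case, and the reduction to a minimal action on $S^1$ or on the collapsed circle $\overline{S^1}$ are all correct (modulo first passing to the orientation-preserving subgroups of index at most $2$, which you should do explicitly before invoking Lemma~A and before speaking of centralizers in $\Homeo_+(S^1)$; the paper does this at the outset).

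The genuine gap is Lemma~B, which is false as stated. A minimal action of $\Delta$ on $S^1$ with no invariant probability measure need not be proximal: the correct trichotomy (Ghys, Margulis) is that such an action is either proximal or a finite cyclic cover of a proximal one, and in the latter case the centralizer of $\Delta$ in $\Homeo_+(S^1)$ is a \emph{nontrivial} finite cyclic group, namely the deck group of the cover. Concretely, lifting a cocompact Fuchsian group whose Euler number is divisible by $k$ to the $k$-fold cover of $\mathbb{RP}^1$ gives a minimal, measure-free action commuting with the rotation of order $k$. Your proof of Lemma~B breaks exactly at the assertion that some $h_n\in\Delta$ contract every compact subset of $S^1\setminus\{a\}$ to a point; in the covering case compact sets are only contracted off a $k$-point set onto a $k$-point set. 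The damage is contained: the corrected conclusion --- the centralizer is finite cyclic --- still yields the theorem. When $\Lambda=S^1$ the image of $G$ in $\Homeo_+(S^1)$ is then finite, so $G$ factors through a finite group; in the Cantor case the image of $G$ in $\Homeo_+(\overline{S^1})$ is finite cyclic, so a finite-index normal subgroup $G_1$ of $G$ acts trivially on $\overline{S^1}$, hence fixes $\Lambda$ pointwise, and Lemma~A (applicable since $G_1$ is finitely generated with $H^1(G_1,\mathbb{R})=0$ by hypothesis) shows that $G_1$ acts trivially on $S^1$. With that repair your argument is a valid, DKN-free proof, at the cost of invoking the minimal/proximal structure theory that the paper's shorter argument avoids.
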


Theorem~\ref{theorem:mcg} follows directly from Theorem~\ref{theorem:main}. Theorem~\ref{theorem:main2} has implications for $C^1$ actions of finite index subgroups of mapping class groups. Even if the mapping class groups don't have property (T) (see \cite{T}), it is still conceivable (and conjectured) that all finite index subgroups have trivial first cohomology.  If this is true, Theorem~\ref{theorem:main2} would imply that the following conjecture is true when the genus of surface is at least 4.

\begin{conjecture} \label{conjecture}
Let $G$ be a finite index subgroup of the mapping class group of $S$, where $S$ is a connected orientable surface with finitely many punctures, finitely many boundary components, and genus at least $2$.  Then any $C^1$ action of $G$ on the circle cannot be faithful.
\end{conjecture}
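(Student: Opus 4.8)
The plan is to establish the conditional statement flagged in the surrounding text rather than the conjecture in its full unconditional form: granting that every finite-index subgroup of the mapping class group of an orientable surface of genus at least $2$ (possibly with boundary and punctures) has trivial first real cohomology, I would deduce the conjecture whenever the genus of $S$ is at least $4$. The key observation driving the argument is that to defeat faithfulness I need only exhibit a \emph{nontrivial} element of the kernel of the action, rather than show the whole action is trivial. This is precisely what makes genus $4$ suffice (as opposed to the genus $6$ of Theorem~\ref{theorem:mcg}) and what lets the weaker cohomological hypothesis of Theorem~\ref{theorem:main2} do the work.

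Let $\rho\colon G \to \Diff^1_+(S^1)$ be a $C^1$ action of a finite-index subgroup $G \leq \mathrm{MCG}(S)$, where $S$ has genus $g \geq 4$. Since $g \geq 4$, I can embed two disjoint genus-$2$ subsurfaces $S_1, S_2 \subseteq S$ with disjoint regular neighbourhoods; mapping classes supported in $S_1$ commute with those supported in $S_2$, so $\mathrm{MCG}(S_1) \times \mathrm{MCG}(S_2)$ sits inside $\mathrm{MCG}(S)$. Put $H_i = G \cap \mathrm{MCG}(S_i)$. Each $H_i$ is finite index in $\mathrm{MCG}(S_i)$ (the intersection of a finite-index subgroup with a subgroup is finite index in that subgroup), the groups $H_1$ and $H_2$ commute, and $H_1 \times H_2 \leq G$, so restricting $\rho$ yields a $C^1$ action of the product $H_1 \times H_2$ on the circle.

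I would then check the hypotheses of Theorem~\ref{theorem:main2} for this product. Any finite-index normal subgroup of $H_i$ is again finite index in $\mathrm{MCG}(S_i)$, and $S_i$ has genus $2$, so by the standing assumption its first real cohomology vanishes. Theorem~\ref{theorem:main2} then forces the action of one factor, say $H_1$, to factor through a finite group; hence $K_1 := \ker(\rho|_{H_1})$ has finite index in $H_1$. Because $H_1$ is finite index in the infinite group $\mathrm{MCG}(S_1)$, the subgroup $K_1$ is itself infinite, in particular nontrivial, and $K_1 \subseteq H_1 \subseteq G$ lies in $\ker\rho$. Therefore $\ker\rho \neq 1$, so $\rho$ is not faithful, which is exactly the desired conclusion.

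The genuine obstacle here is not the dynamical step but the group-theoretic input: the conjecture as literally stated (unconditional, genus $\geq 2$) demands knowing that all finite-index subgroups of the relevant mapping class groups have trivial first real cohomology, which is precisely the open problem the text alludes to. The method is also intrinsically limited to genus $\geq 4$, since it needs $S$ to contain two disjoint subsurfaces each of genus at least $2$ so that the cohomological hypothesis applies to \emph{both} factors; genus $2$ and $3$, as well as the removal of the cohomological assumption, fall outside its reach. A secondary point to handle with care is the embedding of $\mathrm{MCG}(S_1)\times\mathrm{MCG}(S_2)$: taking $S_1,S_2$ genuinely disjoint rather than glued along a single separating curve avoids any ambiguity from a shared boundary twist, and since the argument only uses that $K_1$ is infinite, even a product embedding with a central kernel would still suffice.
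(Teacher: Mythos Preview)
Your proposal is correct and follows essentially the same conditional argument the paper sketches in the Remark after Theorem~\ref{theorem:mcg}: assume the cohomological hypothesis, use genus $\geq 4$ to find two commuting genus-$2$ mapping class subgroups, intersect with $G$ to get finite-index subgroups, and apply Theorem~\ref{theorem:main2} to force an infinite kernel. The only cosmetic difference is that the paper splits $S$ along a single separating curve into $M_1$ and $M_2$, whereas you embed two genuinely disjoint genus-$2$ subsurfaces; as you note yourself, this sidesteps the shared-boundary issue and makes no difference to the conclusion.
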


Theorem~\ref{theorem:main} also implies that for $n  \geq 6$, any $C^1$ action of $Aut(F_n)$ or $Out(F_n)$ on the circle factors through an action of $\mathbb{Z}/2\mathbb{Z}.$ Theorem~\ref{theorem:main2} can be applied to show that products of Kazhdan groups and certain lattices cannot have $C^1$ faithful actions on the circle. The proofs of the main results are given in Section~\ref{section:proofs} and the other corollaries are discussed in greater detail in Section~\ref{section:corollaries}.

Conversations with Benson Farb and John Franks were valuable in the preparation of this article. The author would also like to thank the referee for several useful comments and suggestions.

\section{Tools}

We now present several results that will be used in the proof of Theorem~\ref{theorem:mcg} and Theorem~\ref{theorem:main}.

\subsection{Algebraic properties of $Mod(S)$} \label{subsection:split}

Throughout this paper, unless otherwise stated, by a surface we mean a connected orientable surface with finitely many punctures and boundary components. The mapping class group of $S$, denoted by $Mod(S)$, is the group of isotopy classes of orientation preserving diffeomorphisms of $S$; the diffeomorphisms and the isotopies  fix all the punctures and fix the boundary pointwise.
 
When $S$ is a surface of genus at least $6$, there exists a simple closed separating curve  that splits $S$ into two subsurfaces $M_1$ and $M_2$ such that the genus of each of these subsurfaces is at least 3. So $Mod(S)$ contains the subgroup $Mod(M_1) \times Mod(M_2).$ We will show that either $Mod(M_1)$ acts trivially or $Mod(M_2)$ acts trivially in order to establish Theorem~\ref{theorem:mcg}. 

It is known that if $M$ is a surface of genus $2$, $H_1(Mod(M), \mathbb{Z}) = \mathbb{Z}_{10}$, and if $M$ has genus greater that 2, then $H_1(Mod(M), \mathbb{Z}) = 0.$ We will also use the fact that if the genus of $M$ is at least 2, then $Mod(M)$ is generated by Dehn twists about finitely many non-separating simple closed curves. The survey article \cite{Korkmaz} is a good reference for these results.

\subsection{Thurston's Stability Lemma}

\begin{theorem}[Thurston \cite{Thurston}] \label{theorem:Thurston}
Let $G$ be a finitely generated group acting on $\mathbb{R}^n$ with a global fixed point $x.$ If the action is $C^1$ and $Dg(x)$ is the identity for all $g \in G$, then either there is a nontrivial homomorphism of $G$ into $\mathbb{R}$ or $G$ acts trivially.
\end{theorem}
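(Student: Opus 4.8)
The plan is to run the classical ``first-order'' argument behind Thurston's stability lemma: linearize the action along a carefully chosen sequence converging to the fixed point and extract a nontrivial homomorphism into $(\mathbb{R}^n,+)$ from the leading-order part of the displacement. Normalize so that $x=0$, fix a finite generating set $h_1,\dots,h_m$ of $G$, and for $g\in G$ write $g(y)=y+\varphi_g(y)$. Since $g$ is $C^1$ with $Dg(0)=I$, the displacement satisfies $\varphi_g(0)=0$ and $D\varphi_g(0)=0$; the chain rule gives the cocycle identity $\varphi_{gh}(y)=\varphi_g(h(y))+\varphi_h(y)$, which is what will eventually become additivity.

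First I would dispose of the trivial case. If some neighborhood $U$ of $0$ is fixed pointwise by every $h_i^{\pm1}$, then along any word one checks inductively (reading from the right) that every $g\in G$ fixes $U$, so $G$ acts trivially near $x$ and we are done. Otherwise there is a sequence $y_j\to 0$ with
\[
\epsilon_j:=\max\bigl\{\,\|\varphi_{h_i^{\pm1}}(y_j)\|:1\le i\le m\,\bigr\}>0 .
\]
Passing to a subsequence, I may assume $\varphi_s(y_j)/\epsilon_j$ converges for each of the $2m$ elements $s\in\{h_i^{\pm1}\}$ and that the maximum is attained by the same $s_0$ for all $j$; then $\Lambda(s_0):=\lim_j\varphi_{s_0}(y_j)/\epsilon_j$ has norm $1$.

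The core step is to prove, by induction on word length, that $\Lambda(g):=\lim_j\varphi_g(y_j)/\epsilon_j$ exists in $\mathbb{R}^n$ for \emph{every} $g\in G$ and satisfies $\Lambda(sg)=\Lambda(s)+\Lambda(g)$ for $s\in\{h_i^{\pm1}\}$. Granting $\Lambda(g)$, the cocycle identity gives $\varphi_{sg}(y_j)=\varphi_s(g(y_j))+\varphi_g(y_j)$. Since $g(y_j)-y_j=\varphi_g(y_j)=O(\epsilon_j)\to 0$, both $y_j$ and $g(y_j)$ tend to $0$, and
\[
\varphi_s(g(y_j))-\varphi_s(y_j)=\int_0^1 D\varphi_s\bigl(y_j+t(g(y_j)-y_j)\bigr)\,(g(y_j)-y_j)\,dt
\]
has norm $o(\epsilon_j)$, because $D\varphi_s$ is continuous and vanishes at $0$, the points $y_j+t(g(y_j)-y_j)$ tend to $0$, and $\|g(y_j)-y_j\|=O(\epsilon_j)$. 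Dividing by $\epsilon_j$ and letting $j\to\infty$ shows $\varphi_{sg}(y_j)/\epsilon_j\to\Lambda(s)+\Lambda(g)$, which both establishes convergence and gives additivity, completing the induction. Thus $\Lambda\colon G\to(\mathbb{R}^n,+)$ is a homomorphism with $\Lambda(s_0)\ne 0$, and post-composing with any linear functional $\lambda$ with $\lambda(\Lambda(s_0))\ne 0$ produces a nontrivial homomorphism $G\to\mathbb{R}$.

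I expect the genuine obstacle to be the inductive boundedness of $\varphi_g(y_j)/\epsilon_j$: a word of length $\ell$ could a priori inflate displacements by a factor growing in $\ell$, and the only thing ruling this out is that $C^1$ smoothness forces the error $\varphi_s(g(y_j))-\varphi_s(y_j)$ to be $o(\epsilon_j)$, not merely $O(\epsilon_j)$. This is precisely where the hypotheses $Dg(x)=I$ and, crucially, \emph{continuity} of the derivative are used; it is also the reason the conclusion genuinely fails in the $C^0$ category, e.g.\ for the Nielsen--Thurston actions of mapping class groups on the circle. Finally, ``$G$ acts trivially'' should be read as ``$G$ acts trivially on a neighborhood of $x$'' (equivalently, as germs at $x$), which is what the argument yields and all that the applications require.
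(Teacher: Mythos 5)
The paper states this result as a citation to Thurston and gives no proof of its own, so there is no in-paper argument to compare against; your reconstruction is the standard proof of the stability lemma and is correct: the cocycle identity $\varphi_{gh}=\varphi_g\circ h+\varphi_h$ plus the $o(\epsilon_j)$ estimate coming from continuity of $D\varphi_s$ and $D\varphi_s(0)=0$ is exactly the right mechanism, and the induction on word length does yield a well-defined additive $\Lambda\colon G\to\mathbb{R}^n$ with $\Lambda(s_0)\neq 0$. Your closing caveat is also the right one: the argument gives triviality only as germs at $x$, and the global conclusion used in Lemma~\ref{lemma:trivial} requires the standard bootstrap (apply the germ statement at a boundary point of the closed set of global fixed points to show that set is open, hence all of the circle).
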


We will need the following result which is a direct consequence of Thurston's Stability Lemma (Theorem~\ref{theorem:Thurston}).

\begin{lemma} \label{lemma:trivial}
Let $G$ be a finitely generated group with an orientation preserving action on the circle. If $G$ acts with a global fixed point and $H^1(G, \mathbb{R}) = 0$, then $G$ acts trivially.
\end{lemma}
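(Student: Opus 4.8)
The plan is to deduce Lemma~\ref{lemma:trivial} from Thurston's Stability Lemma (Theorem~\ref{theorem:Thurston}) by passing from the circle to the real line at a global fixed point. Suppose $G$ acts on $S^1$ preserving orientation, with a global fixed point $p \in S^1$. First I would cut the circle open at $p$: removing $p$ leaves an open arc $S^1 \setminus \{p\}$, which is homeomorphic to $\mathbb{R}$, and since every $g \in G$ fixes $p$ and preserves orientation, each $g$ restricts to an orientation-preserving homeomorphism of this arc. Thus the $G$-action descends to an action on $\mathbb{R}$. Note this action need not have a global fixed point in $\mathbb{R}$, so Thurston's lemma does not apply directly yet.

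The key device is to compactify $\mathbb{R}$ to $[0,1]$, adjoining two endpoints $0$ and $1$ corresponding to the "two sides" of $p$. The $G$-action extends to $[0,1]$ fixing both endpoints, hence $0$ is a global fixed point of a $C^1$ action on $[0,1] \subset \mathbb{R}$ (one must check the extension is still $C^1$ at $0$; this follows because near $p$ the original circle action is $C^1$ and the two local branches at $p$ glue to give a one-sided $C^1$ germ at each endpoint — this is the one genuinely technical point). Now consider the homomorphism $g \mapsto Dg(0) \in \mathbb{R}_{>0}$. This is a homomorphism from $G$ to the abelian group $(\mathbb{R}_{>0}, \times) \cong (\mathbb{R},+)$; since $H^1(G,\mathbb{R}) = \mathrm{Hom}(G,\mathbb{R}) = 0$, it is trivial, so $Dg(0) = 1$ for all $g$. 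Hence Theorem~\ref{theorem:Thurston} applies (with $n=1$, global fixed point $0$, all derivatives at the fixed point equal to the identity): either $G$ admits a nontrivial homomorphism to $\mathbb{R}$ — impossible, again by $H^1(G,\mathbb{R})=0$ — or $G$ acts trivially on a neighborhood of $0$ in $[0,1]$, hence on all of $[0,1]$ by analytic-continuation-style propagation... actually more carefully, Thurston's lemma gives that the action on $[0,1]$ is trivial outright. Pulling back, $G$ acts trivially on $S^1 \setminus \{p\}$, and by continuity trivially on $S^1$.

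The main obstacle I anticipate is the smoothness of the extension across the puncture point $p$: one must verify that cutting $S^1$ at $p$ and reparametrizing to $[0,1]$ preserves the $C^1$ condition, in particular that the derivative $Dg$ has a well-defined one-sided limit at each endpoint and that this defines a genuine $C^1$ germ to which Theorem~\ref{theorem:Thurston} applies. An alternative that sidesteps this is to work with the two one-sided actions separately: $G$ acts $C^1$ on a half-open interval $[p, p+\epsilon)$ for a local chart, with fixed point $p$, and one runs Thurston's argument there; but the cleanest route is the compactification above, since Theorem~\ref{theorem:Thurston} is stated for $\mathbb{R}^n$ and $[0,1]$ embeds in $\mathbb{R}$. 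Everything else — the identification $\mathrm{Hom}(G,\mathbb{R}) = H^1(G,\mathbb{R})$ for the trivial module $\mathbb{R}$, and the fact that $\mathbb{R}_{>0}$ under multiplication is isomorphic to $(\mathbb{R},+)$ — is routine.
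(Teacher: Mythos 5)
Your proposal is correct and is essentially the paper's argument: both use the derivative at the global fixed point to define a homomorphism $g \mapsto \log(Dg(\cdot))$ into $\mathbb{R}$, kill it with $H^1(G,\mathbb{R}) = \mathrm{Hom}(G,\mathbb{R}) = 0$, and then invoke Thurston's Stability Lemma to force triviality. The only difference is that you carefully cut the circle open at the fixed point and compactify to $[0,1]$ so that Theorem~\ref{theorem:Thurston} applies literally, whereas the paper works directly with $g'(x)$ at the fixed point on the circle and leaves that (routine) localization implicit.
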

\begin{proof}
Let $x$ be the global fixed point. It suffices to show that $g'(x) = 1$ for all $g \in G.$ So consider the homomorphism $L : G \to \mathbb{R}$ defined by $L(g) = \log (g'(x))$. Since $H^1(G, \mathbb{R}) = 0$, this must be the trivial homomorphism, which implies that $g'(x) = 1$ for all $g \in G.$
\end{proof}

\subsection{Rotation numbers}

The subject mater of this subsection is well known.  The interested reader may refer to \cite{Ghys} or even \cite{Farb&Franks} for more details.

\begin{definition}
Let $G$ be a finitely generated subgroup of the orientation preserving homeomorphisms of the circle and let $\mu$ be a $G$-invariant probability measure.
The \textit{mean rotation number homomorphism} is the map $\rho: G \to \mathbb{R}/\mathbb{Z}$ defined by
\[
g \to \int_{S^1} (\tilde{g} - Id) \, d \tilde{\mu} \, \, (\textrm{mod} \, 1), 
\]
where $\tilde{g}$ and $\tilde{\mu}$ are lifts of $g$ and $\mu$ to the real line and the integral is over a single fundamental domain. 
\end{definition}

The fact that this map is a homomorphism follows easily from the assumption that $G$ preserves $\mu.$ Note that the mean rotation number of a homeomorphism is the same as the translation number of a circle homeomorphism, which was originally defined by Poincar\'e.

\begin{proposition}
Let $G$ be a finitely generated group with an orientation preserving action on the circle and let $\mu$ be a $G$-invariant probability measure. The element $g \in G$ acts with a fixed point if $\rho(g) = 0.$
\end{proposition}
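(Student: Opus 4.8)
The plan is to prove the contrapositive: if $g$ has no fixed point on $S^1$, then $\rho(g)\neq 0$. Fix any lift $\tilde g:\mathbb{R}\to\mathbb{R}$ of $g$ and set $\phi=\tilde g-\mathrm{Id}$. Since $g$ is an orientation preserving homeomorphism, its lift commutes with the unit translation, so $\phi(x+1)=\phi(x)$; thus $\phi$ is a continuous $1$-periodic function, it descends to a continuous function $\bar\phi$ on $S^1$, and $\int_{S^1}(\tilde g-\mathrm{Id})\,d\tilde\mu=\int_{S^1}\bar\phi\,d\mu$, the value being independent of the chosen fundamental domain because $\phi$ is periodic and $\tilde\mu$ is translation invariant.

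Next I would observe that $g$ has a fixed point if and only if $\phi$ takes an integer value somewhere. So, assuming $g$ has no fixed point, $\phi$ avoids $\mathbb{Z}$ entirely. Because $\phi$ is continuous and $1$-periodic, its image is a compact connected subset of $\mathbb{R}$, i.e.\ a closed interval $[\min\phi,\max\phi]$, and this interval is disjoint from $\mathbb{Z}$; hence it is contained in $(n,n+1)$ for a single integer $n$. This step --- turning ``$g$ has no fixed point'' into ``$\tilde g-\mathrm{Id}$ is trapped in one open unit interval'' --- is the one place where a little care is needed, but it is just the intermediate value theorem together with compactness of the circle.

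Finally, since $\tilde\mu$ restricts to a probability measure on a fundamental domain and $\min\phi\le\bar\phi\le\max\phi$, the integral $\int_{S^1}\bar\phi\,d\mu$ lies in $[\min\phi,\max\phi]\subset(n,n+1)$, so it is not an integer. Therefore $\rho(g)=\int_{S^1}(\tilde g-\mathrm{Id})\,d\tilde\mu \pmod 1 \neq 0$, and the contrapositive gives the proposition. I should remark that only the fact that $\mu$ is a probability measure is used here; the $G$-invariance of $\mu$ is what guarantees that $\rho$ is a well-defined homomorphism in the first place (equivalently, that $\rho(g)$ coincides with the Poincar\'e translation number of $g$), and the statement is in essence the classical fact that a circle homeomorphism with vanishing translation number has a fixed point.
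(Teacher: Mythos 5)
Your proof is correct. The paper states this proposition without proof, deferring to the standard references (Ghys, Farb--Franks), and your argument is exactly the classical one: the displacement function $\tilde g-\mathrm{Id}$ is continuous and $1$-periodic, a fixed point of $g$ corresponds to an integer value of this function, and if no such value occurs the function's (compact, connected) image lies in a single interval $(n,n+1)$, forcing the average against any probability measure to be non-integral. Your closing remark is also accurate: $G$-invariance of $\mu$ plays no role in this implication; it is only needed to make $\rho$ a homomorphism.
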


\begin{corollary}
Let $G$ be a finitely generated group with an orientation preserving action on the circle and let $\mu$ be a $G$-invariant probability measure. If $H_1(G,\mathbb{Z}) = 0$, then $G$ acts with a global fixed point.
\end{corollary}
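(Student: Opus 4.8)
The plan is to combine the two preceding facts about rotation numbers with an elementary analysis of the support of $\mu$, and the argument naturally splits according to whether $\mu$ has atoms. First I would observe that, since $\mathbb{R}/\mathbb{Z}$ is abelian and $H_1(G,\mathbb{Z})=0$, the mean rotation number homomorphism $\rho\colon G\to\mathbb{R}/\mathbb{Z}$ is identically zero. By the preceding Proposition this already gives that every single $g\in G$ has a fixed point; the real content is upgrading this to a \emph{common} fixed point, and for that I will use $\rho=0$ more directly.

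In the case where $\mu$ has an atom, I would set $M=\max_{x\in S^1}\mu(\{x\})>0$ and $A=\{x\in S^1:\mu(\{x\})=M\}$. Because $\mu$ is a probability measure, $A$ is finite; it is nonempty and $G$-invariant since each $g$ preserves $\mu$. Then $G$ acts on $A$ by cyclic-order-preserving permutations, and the group of such permutations of a finite cyclically ordered set of cardinality $m$ is cyclic of order $m$. This yields a homomorphism $G\to\mathbb{Z}/m\mathbb{Z}$, which is trivial as $H_1(G,\mathbb{Z})=0$, so $G$ fixes every point of $A$ and we are done. (This half does not even use $\rho=0$.)

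In the case where $\mu$ has no atoms, I would use $\mu$ to build the standard monotone degree-one semiconjugacy $h\colon S^1\to S^1$ from the $G$-action to an action by rotations: lifting $\mu$ to a measure $\tilde\mu$ on $\mathbb{R}$ and setting $F(t)=\tilde\mu([0,t])$ produces a continuous nondecreasing map with $F(t+1)=F(t)+1$ that descends to $h$, and invariance of $\mu$ shows $h$ intertwines $g$ with rotation by $\rho(g)$. Since $\rho\equiv 0$, this means $h\circ g=h$ for all $g\in G$. Now $\operatorname{supp}(\mu)$ is a nonempty perfect set (an isolated point of the support would be an atom), hence uncountable, whereas the non-singleton fibers of $h$ correspond to the complementary gaps of $\operatorname{supp}(\mu)$, of which there are only countably many. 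So I can choose $x\in\operatorname{supp}(\mu)$ with $h^{-1}(h(x))=\{x\}$; then $h(gx)=h(x)$ forces $gx=x$ for every $g\in G$, giving the desired global fixed point.

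The only non-formal step is this passage from ``every element has a fixed point'' to ``there is a common fixed point'': triviality of $\rho$ by itself is not enough, and one genuinely needs either the finite-orbit argument in the atomic case or the semiconjugacy-plus-perfect-set argument in the non-atomic case. Everything else — that $\rho$ kills the abelianization, the construction of $h$, and the description of its fibers — is routine.
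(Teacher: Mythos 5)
Your argument is correct, but it takes a genuinely different (and considerably longer) route than the paper. The paper's proof is: since $G$ is perfect, $\rho$ is trivial, so every $g\in G$ has a fixed point; and for an orientation-preserving circle homeomorphism $g$ that has a fixed point and preserves $\mu$, one has $\operatorname{supp}(\mu)\subseteq\operatorname{Fix}(g)$ (on each complementary interval of $\operatorname{Fix}(g)$ the dynamics is wandering --- the interval decomposes into infinitely many disjoint iterates $[g^nx,g^{n+1}x)$ of equal mass --- so such intervals have measure zero). Hence any point of the nonempty set $\operatorname{supp}(\mu)$ is a common fixed point, and the argument runs element by element with no case division. You instead split on whether $\mu$ has atoms: in the atomic case you exploit that a finite invariant set gives a homomorphism to a finite cyclic group, which perfectness kills (and, as you note, this half does not even need $\rho=0$); in the non-atomic case you build the monotone semiconjugacy $h$ onto the rotation action and use that $\operatorname{supp}(\mu)$ is perfect while only countably many of its points lie in non-singleton fibers. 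Both halves are sound, and the semiconjugacy picture is a useful general tool, but your closing remark that one ``genuinely needs'' one of your two arguments to pass from individual to common fixed points is not quite right: the wandering-interval observation above gives the common fixed point directly from ``each element fixes a point and preserves $\mu$,'' which is exactly the shortcut the paper takes.
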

\begin{proof}
Since $G$ is perfect and the target of  $\rho$, the rotation number homomorphism, is an abelian group, $\rho$ must be trivial. The above proposition implies that every element acts with a fixed point. The support of $\mu$ is contained in the fixed point set of each element in $G$, and therefore, $G$ must have a global fixed point.
\end{proof}

\begin{corollary} \label{corollary:finite}
Let $G$ be a finitely generated group with an orientation preserving action on the circle and let $\mu$ be a $G$-invariant probability measure. If $H^1(G,\mathbb{R}) = 0$, then $G$ has a periodic orbit. In particular, a finite index normal subgroup of $G$ acts with a global fixed point.
\end{corollary}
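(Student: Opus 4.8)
The plan is to run the mean rotation number construction exactly as in the previous corollary, but now extract from the hypothesis $H^1(G,\mathbb{R})=0$ the stronger conclusion that the abelianization of $G$ is finite, which bounds the image of the rotation number homomorphism.

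First I would record an algebraic observation: since $G$ is finitely generated, $H_1(G,\mathbb{Z})=G/[G,G]$ is a finitely generated abelian group, say $\mathbb{Z}^r\oplus T$ with $T$ finite; the hypothesis $H^1(G,\mathbb{R})=\mathrm{Hom}(G,\mathbb{R})=\mathrm{Hom}(H_1(G,\mathbb{Z}),\mathbb{R})\cong\mathbb{R}^r$ vanishes, so $r=0$ and $H_1(G,\mathbb{Z})$ is finite. Next, because $\mathbb{R}/\mathbb{Z}$ is abelian, the mean rotation number homomorphism $\rho\colon G\to\mathbb{R}/\mathbb{Z}$ associated to $\mu$ factors through $H_1(G,\mathbb{Z})$, so $\rho(G)$ is finite and $N:=\ker\rho$ is a normal subgroup of $G$ of finite index.

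Now each $h\in N$ has $\rho(h)=0$, so (as in the proof of the preceding corollary) $\mathrm{supp}(\mu)$ is contained in $\mathrm{Fix}(h)$: a complementary interval of $\mathrm{Fix}(h)$ is a wandering interval for $h$ and hence $\mu$-null. Therefore the nonempty closed set $\mathrm{supp}(\mu)$ lies in $\bigcap_{h\in N}\mathrm{Fix}(h)$, so $N$ acts with a global fixed point. Finally, to produce a periodic orbit of $G$ itself, I would pick any $x\in\mathrm{Fix}(N)$; its stabilizer in $G$ contains $N$, so the orbit $G\cdot x$ has at most $[G:N]<\infty$ points, i.e. it is a finite (periodic) orbit.

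The only step that is not pure formality is the claim that rotation number $0$ with respect to an invariant measure forces the support of that measure into the fixed-point set; but this is precisely the fact already used tacitly in the previous corollary, and it follows from the classical description of circle homeomorphisms possessing a fixed point. I do not expect a genuine obstacle: the entire content is the upgrade from $\mathrm{Hom}(G,\mathbb{R})=0$ to finiteness of $H_1(G,\mathbb{Z})$, after which the rest is bookkeeping with $\ker\rho$.
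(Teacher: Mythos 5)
Your proposal is correct and follows essentially the same route as the paper: both deduce from $H^1(G,\mathbb{R})=\mathrm{Hom}(G,\mathbb{R})=0$ (plus finite generation) that the mean rotation number homomorphism has finite image, pass to its finite-index kernel, and locate $\mathrm{supp}(\mu)$ inside the common fixed-point set of that kernel. You merely make explicit two steps the paper leaves implicit --- the finiteness of the abelianization and the wandering-interval argument showing $\mathrm{supp}(\mu)\subset\mathrm{Fix}(h)$ --- both of which are handled correctly.
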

\begin{proof}
The assumption $H^1(G,\mathbb{R}) = 0$ implies that the image of the mean rotation number homomorphism is a finite group, and therefore, the kernel $K$ is a finite index normal subgroup. The proposition above implies that every element in $K$ acts with a fixed point. Recall that the support of the invariant measure is contained in the fixed point sets of every element in $K.$ So $K$, a finite index normal subgroup of $G$, acts with a global fixed point.
\end{proof}
\subsection{Hyperbolic fixed points}

The proof of Theorem~\ref{theorem:main} relies heavily on the following result, which guarantees the existence of an element with a finite fixed point set in the absence of an invariant probability measure.

\begin{theorem}[Deroin, Kleptsyn, and Navas \cite{DNK}]
Let $G$ be a countable group with an orientation preserving $C^1$ action on the circle.  If there is no $G$-invariant probability measure for the action, then there exists an element $g \in G$ that only has hyperbolic fixed points. In particular, $g$ has a nonempty finite set of fixed points. 
\end{theorem}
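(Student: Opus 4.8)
\textbf{Reduction to the minimal set.} If the action had a finite orbit, the uniform probability measure on that orbit would be $G$-invariant, contrary to hypothesis; so every orbit is infinite and there is a unique minimal set $\Lambda \subset S^1$, either a Cantor set or all of $S^1$. The absence of an invariant probability measure enters through the Margulis--Ghys alternative for subgroups of $\Homeo_{+}(S^1)$: the action on $\Lambda$ is \emph{strongly proximal}, i.e.\ for every proper closed arc $K$ meeting $\Lambda$ and every nonempty open arc $V$ meeting $\Lambda$ there is $h \in G$ with $h(K) \cap \Lambda \subset V$. I would work throughout with the action on $\Lambda$, so that the Cantor and minimal cases are handled uniformly and the element finally produced lies in $G$ itself; the Cantor case is essentially the classical setting of Sacksteder's theorem, while the case $\Lambda = S^1$ is the genuinely new ingredient.

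\textbf{Producing a hyperbolic contraction.} The core claim is that there exist $g \in G$ and a closed arc $I$ meeting $\Lambda$ with $g(\bar I) \subset \mathrm{int}\,I$, with $\bigcap_{n \ge 1} g^{n}(\bar I)$ a single (hence fixed) point $p$, and with $g'(p) < 1$; thus $p$ is a hyperbolic fixed point. To obtain this I would fix a symmetric probability measure of full support on $G$, form the associated random walk on $S^1$, and choose a stationary measure $\nu$. If $\nu$ were $G$-invariant we would contradict the hypothesis, so $\nu$ is stationary but not invariant, and the Furstenberg martingale argument shows that along almost every trajectory the pushforwards $(w_1 \cdots w_n)_\ast \nu$ converge to a Dirac mass --- that is, long random compositions contract some arc into itself. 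This already yields a \emph{topological} contraction $g$ with $g(\bar I) \subset \mathrm{int}\,I$ and $|g^{n}(\bar I)| \to 0$. Upgrading ``topologically attracting'' to ``hyperbolic'', i.e.\ obtaining $g'(p) < 1$ rather than merely $g'(p) \le 1$, requires a distortion estimate for $g^{n}$ along the orbit $\{g^{n}(\bar I)\}$ --- and this is exactly the point where $C^1$-regularity is used: one cannot invoke the usual $C^{1+\alpha}$ estimate and must instead exploit summability of the lengths of the intervals in a sufficiently short orbit. I expect this distortion control to be where essentially all of the technical work lies.

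\textbf{Upgrading to an element with only hyperbolic fixed points.} Given $g$ as above with hyperbolic attracting fixed point $p$, let $W = g^{k}(\bar I)$ for $k$ large, a short arc around $p$ on which $g$ is a genuine contraction towards $p$ with no other fixed point. By strong proximality of the action on $\Lambda$, choose $h \in G$ carrying the long complementary arc $\overline{S^1 \setminus W}$ --- and hence all the remaining fixed points of $g$ --- into $W$. A ping-pong computation with $g$ and $h$ then produces an element $f$ (of the form $g^{N} h$ for suitable $N$) with a finite fixed-point set all of whose points are hyperbolic, e.g.\ with north--south dynamics on $S^1$; in the Cantor case a small extra argument rules out spurious fixed points inside complementary gaps. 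I expect the bookkeeping of this ping-pong step, together with the $C^1$ distortion estimate above, to be the two main obstacles; the remaining steps are soft.
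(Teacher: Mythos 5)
First, note that the paper does not prove this statement at all: it is imported verbatim from Deroin--Kleptsyn--Navas \cite{DNK} as a black box, so there is no internal proof to compare against and your sketch has to be judged as a reconstruction of their argument. The first two of your three steps do match what they actually do: ruling out finite orbits, passing to the unique minimal set, taking a stationary measure for a random walk and extracting topological contractions via the Furstenberg martingale argument, and then upgrading a topological contraction to a hyperbolic fixed point by a distortion estimate tailored to $C^1$ regularity (summable interval lengths in place of the usual $C^{1+\alpha}$ bounded-distortion lemma). One small inaccuracy: without an invariant measure the minimal action is strongly proximal only up to a finite cyclic cover (there is an integer $d\ge 1$ and a finite rotation group commuting with the action), so you cannot contract arbitrary proper arcs of $\Lambda$, only sufficiently short ones; this is harmless but should be stated correctly.

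The genuine gap is in your third step, and it is not bookkeeping. (i) The specific element $f=g^{N}h$ does not work: since $h$ maps $\overline{S^1\setminus W}$ into $W$, the image $h(W)$ contains $S^1\setminus W$ and hence sweeps over almost the whole circle, so $f$ is completely uncontrolled on $W$ itself; nothing prevents $\mathrm{Fix}(f)\cap W$ from being infinite (even a Cantor set). (ii) More fundamentally, no purely topological ping-pong can prove this theorem, because topological north--south dynamics (an element mapping the complement of one arc $U$ into a disjoint arc $V$) only confines $\mathrm{Fix}(f)$ to $U\cup V$ and is perfectly compatible with infinitely many parabolic fixed points inside those arcs. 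The conclusion ``only hyperbolic fixed points, hence finitely many'' forces you to control the derivative of $f$ \emph{uniformly on entire trapping intervals}, not just at the single point $p$: one needs $|f'|$ bounded away from $1$ from below on a neighborhood of the attracting region and, simultaneously, the analogous bound for $f^{-1}$ on the repelling region (obtained by running the same Lyapunov-exponent and distortion analysis for the inverse random walk). That uniform two-sided derivative control is exactly what forces at most one fixed point per interval and hyperbolicity of each, and it is the second place where the $C^1$ distortion machinery is indispensable. As written, your sketch establishes (modulo the acknowledged technical work) only the existence of an element with \emph{a} hyperbolic fixed point, which is strictly weaker than the statement being quoted and, in particular, is not enough for the application in CASE 2 of Theorem~\ref{theorem:main}, where the finiteness of $\mathrm{Fix}(g)$ is what produces the $H$-invariant measure.
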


\section{$C^1$ actions on the circle} \label{section:proofs}

\begin{theorem:main}
Let $H$ and $G$ be two finitely generated groups such that $H_1(G, \mathbb{Z}) = H_1(H, \mathbb{Z}) = 0$. Then for any $C^1$ action of $H \times G$ on the circle, either $H \times 1$ acts trivially or $1 \times G$ acts trivially.
\end{theorem:main}
\begin{proof}
Suppose that there exists a $C^1$ action of $H \times G$ on the circle. Since $H$ and $G$ are both perfect, this action must be orientation preserving. We now consider the induced action of $G$ on the circle. By the result of Deroin, Kleptsyn, and Navas, either there exists a probability measure $\mu$ that is $G$-invariant or there is an element  $g \in G$ that has a finite number of fixed points. We treat these two cases separately.

\smallskip

\noindent
CASE 1: There exists a probability measure $\mu$ that is $G$-invariant.

Now consider the mean rotation number homomorphism.  This has to be trivial, and so, every element acts with a fixed point. The support of the measure $\mu$ is contained in the intersection of the fixed point sets of all the elements.  So $G$ has a global fixed point.   Now apply the corollary to Thurston's Stability Lemma (Lemma~\ref{lemma:trivial}) to conclude that  $G$ acts trivially.

\smallskip

\noindent
CASE 2:  There is an element $g$ in $G$ that has a finite number of fixed points.

In this case, there is a finite set---the set of hyperbolic fixed points---left invariant by the induced action of the group $H.$ This implies that the action of the group $H$ has an invariant measure, and now we may argue as above, with $G$ replaced by $H$, to conclude that $H$ acts trivially.
\end{proof}

The proof of Theorem~\ref{theorem:main2} is almost identical to the one given above.  The assumption of trivial cohomologies rather than trivial homologies guarantees the existence of periodic points and not necessarily fixed points.  These periodic points are fixed points for some finite index normal subgroup, which must act trivially for the same reasons presented above.  The proof can be easily obtained by making minor adjustments to the arguments in the proof of Theorem~\ref{theorem:main} above.

\begin{theorem:main2}
Let $H$ and $G$ be two finitely generated groups such that $H^1(G_0,\mathbb{R}) = H^1(H_0,\mathbb{R}) = 0$ for all finite index normal subgroups $H_0$ and $G_0$ of $H$ and $G$ respectively. Then for any $C^1$ action of $H \times G$ on the circle, the induced action of either $H \times 1$ or $1 \times G$ factors through an action of a finite group.
\end{theorem:main2}
\begin{proof}
Suppose that there exists a $C^1$ action of $H \times G$ on the circle. Let $H_0$ and $G_0$ be the index $2$ subgroups of $H$ and $G$ respectively that have induced orientation preserving actions. We now focus on the action of the group $G_0.$ By the result of Deroin, Kleptsyn, and Navas, either there exists a probability measure $\mu$ that is $G_0$-invariant or there is an element  $g \in G_0$ that has a finite number of fixed points. We treat these two cases separately.

\smallskip

\noindent
CASE 1: There exists a probability measure $\mu$ that is $G_0$-invariant.

Apply Corollary~\ref{corollary:finite} to obtain a finite index normal subgroup $G_1$ that acts with a global fixed point.     Now Thurston's Stability Lemma (Lemma~\ref{lemma:trivial}) implies that  $G_1$ acts trivially, and so, the induced action of $G$ factors through an action of a finite group.

\smallskip

\noindent
CASE 2:  There is an element $g$ in $G_0$ that has a finite number of fixed points.

In this case, there is a finite set---the set of hyperbolic fixed points---left invariant by the induced action of the group $H_0.$ This implies that the action of the group $H_0$ has an invariant measure, and now we may argue as above, with $G_0$ replaced by $H_0$, to conclude that the induced action of $H$ factors through an action of a finite group.
\end{proof}

We are now ready to prove Theorem~\ref{theorem:mcg}, which follows immediately from Theorem~\ref{theorem:main} above.

\begin{theorem:mcg}
Let $S$ be a connected orientable surface with finitely many punctures, finitely many boundary components, and genus at least $6.$ Then any $C^1$ action of the mapping class group of $S$ on the circle is trivial.
\end{theorem:mcg}
\begin{proof}
We may split the surface $S$ up into two subsurfaces $M_1$ and $M_2$ as in Section~\ref{subsection:split} to obtain the action of $Mod(M_1) \times Mod(M_2)$ on the circle. Note that $Mod(M_1)$ and $Mod(M_2)$ are both finitely generated and perfect (see \cite{Korkmaz}). Theorem~\ref{theorem:main} implies that either $Mod(M_1)$ acts trivially or $Mod(M_2)$ acts trivially.  In both cases, a Dehn twist about a non-separating simple closed curve acts trivially. Since all such Dehn twists are conjugate to each other (see \cite{Ivanov} for instance), all of them must act trivially.  Now recall the $Mod(S)$ is generated by these Dehn twists.
\end{proof}

\begin{remark}
Suppose that every finite index normal subgroup of $Mod(S)$ has trivial first cohomology when $S$ is a surface of genus at least 2. Now consider a $C^1$ action of a finite index subgroup $K$ of $Mod(S)$, where $S$ has genus at least 4. As in the proof of the theorem above, we may split $S$ into two subsurfaces $M_1$ and $M_2$, each with genus at least 2. The action of $K$ induces an action of $H \times G$, where $H$ and $G$ are finite index subgroups of $Mod(M_1)$ and $Mod(M_2)$ respectively. Under our assumptions, Theorem~\ref{theorem:main2} implies that the action of $K$ cannot be faithful. This elaborates on the comments just beore Conjecture~\ref{conjecture}.
\end{remark}

The following corollary is a direct consequence of Theorem~\ref{theorem:main}.

\begin{corollary}
For $1 \leq i \leq n$, let $G_i$ be a finitely generated group with $H_1(G_i, \mathbb{Z}) = 0.$ Then for any $C^1$ action of $G_1 \times G_2 \times \dots \times G_n$ on the circle, there exists at most one $G_i$ with a nontrivial induced action on the circle.
\end{corollary}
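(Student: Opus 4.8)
The plan is to reduce the $n$-fold product case to the two-fold case of Theorem~\ref{theorem:main} by an iterative grouping argument. First I would observe that a finite product of finitely generated groups is finitely generated, and that $H_1(-,\mathbb{Z})$ of a direct product is the direct sum of the $H_1$'s, so in particular $H_1(G_2 \times \dots \times G_n, \mathbb{Z}) = 0$ whenever each factor has trivial abelianization. Thus the group $G_2 \times \dots \times G_n$ satisfies the same hypothesis as each $G_i$.

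Next I would write $G_1 \times G_2 \times \dots \times G_n = G_1 \times (G_2 \times \dots \times G_n)$ and apply Theorem~\ref{theorem:main} with $H = G_1$ and $G = G_2 \times \dots \times G_n$. This yields that either $G_1 \times 1$ acts trivially or $1 \times (G_2 \times \dots \times G_n)$ acts trivially on the circle. In the latter case every $G_i$ for $i \geq 2$ acts trivially, so at most $G_1$ has a nontrivial induced action and we are done. In the former case, $G_1$ acts trivially, and I would recurse: the induced action of $G_2 \times \dots \times G_n$ on the circle again satisfies the hypotheses, so by induction on $n$ at most one of $G_2, \dots, G_n$ has a nontrivial induced action. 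The base case $n = 1$ is vacuous and $n = 2$ is precisely Theorem~\ref{theorem:main}.

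The only genuine point requiring care is the bookkeeping that the induced action of a direct factor on the circle is again orientation preserving (which follows, as in the proof of Theorem~\ref{theorem:main}, from perfectness of each $G_i$ and hence of any subproduct), and that "trivial induced action" is inherited correctly under the recursion — that is, if $G_1 \times \dots \times G_n$ acts and $G_1$ acts trivially, then restricting to $G_2 \times \dots \times G_n$ genuinely gives a $C^1$ action to which the inductive hypothesis applies. Both are immediate. I do not expect any real obstacle here; the corollary is a formal consequence of Theorem~\ref{theorem:main} together with the additivity of $H_1$ under direct products.
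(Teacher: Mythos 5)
Your argument is correct and is exactly the kind of ``direct consequence'' the paper has in mind (the paper in fact states this corollary without proof): the key observations---that a finite product of finitely generated perfect groups is finitely generated and perfect, and that Theorem~\ref{theorem:main} can then be applied to the splitting $G_1 \times (G_2 \times \dots \times G_n)$ and iterated---are all one needs. An equally quick alternative, avoiding the induction, is to restrict the action to the subgroup $G_i \times G_j$ for each pair $i \neq j$ and apply Theorem~\ref{theorem:main} directly to conclude that at least one of any two factors acts trivially.
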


An analogous statement follows from Theorem~\ref{theorem:main2}.

\section{Other corollaries} \label{section:corollaries}

We first apply Theorem~\ref{theorem:main} to $Aut(F_n)$ and $Out(F_n)$ and then apply Theorem~\ref{theorem:main2} to show that products of Kazhdan groups cannot act smoothly on the circle.

\subsection{$Aut(F_n)$ and $Out(F_n)$}

We now discuss the actions of  $Aut(F_n)$, the automorphism group of $F_n$, and $Out(F_n)$, the outer automorphism group of $F_n$, where $F_n$ is the free group of rank $n.$ The proof that the $C^1$ actions of $Aut(F_n)$ and $Out(F_n)$ on the circle are trivial is similar to the proof of Theorem~\ref{theorem:mcg}.

The lemma below follows directly from the proof of Lemma 4.1 in \cite{Farb&Franks}.

\begin{lemma}
For $n \geq 3$, $Aut(F_n)$ contains a finitely generated subgroup $T$ of index $2$ which has a set of generators $\{A_{ij} , B_{ij} \}$ with $i \neq j$, $1 \leq i \leq n$ and $1 \leq j \leq n.$ The subgroup $T$ has a following properties.

\begin{itemize}

\item $H_1(T, \mathbb{Z}) = 0.$

\item $A_{ij}$ is conjugate to $A_{kl}$ and $B_{ij}$ is conjugate to $B_{kl}$, for all positive integers $i, j, k, l$ between $1$ and $n.$

\end{itemize} 
\end{lemma}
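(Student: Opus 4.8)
The plan is to reconstruct $T$, together with its distinguished generating set, from the argument behind Lemma~4.1 of \cite{Farb&Franks}. Let $\pi\colon Aut(F_n)\to GL_n(\mathbb{Z})$ be the surjection induced by the abelianization $F_n\to\mathbb{Z}^n$, and set $T=\pi^{-1}\bigl(SL_n(\mathbb{Z})\bigr)$. An automorphism inverting a single generator maps under $\pi$ to a matrix of determinant $-1$, so $\det\circ\,\pi\colon Aut(F_n)\to\{\pm1\}$ is onto and $T$ is normal of index $2$; being finite index in the finitely generated group $Aut(F_n)$, it is itself finitely generated. For $i\ne j$ let $A_{ij}$ be the Nielsen automorphism $x_i\mapsto x_ix_j$ (fixing all other generators) and $B_{ij}$ the automorphism $x_i\mapsto x_jx_i$ (fixing all other generators); each maps under $\pi$ to an elementary matrix, hence lies in $T$.

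The first step is to check that $\{A_{ij},B_{ij}\}$ generates $T$. Since the elementary matrices generate $SL_n(\mathbb{Z})$, the subgroup $\langle A_{ij},B_{ij}\rangle$ surjects onto $SL_n(\mathbb{Z})=T/IA_n$, where $IA_n=\ker\pi$, so it suffices to see $IA_n\subseteq\langle A_{ij},B_{ij}\rangle$. By Magnus's theorem ($n\ge3$), $IA_n$ is generated by the automorphisms $x_i\mapsto x_j^{-1}x_ix_j$ and $x_i\mapsto x_i[x_j,x_k]$ (others fixed), and a short computation identifies the first with $A_{ij}B_{ij}^{-1}$ and the second with $A_{ij}\,[A_{ik},A_{kj}]$ — which uses a triple of distinct indices, hence $n\ge3$. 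Thus $\langle A_{ij},B_{ij}\rangle=T$.

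For the vanishing $H_1(T,\mathbb{Z})=0$ I would feed the extension $1\to IA_n\to T\to SL_n(\mathbb{Z})\to1$ into the five-term homology exact sequence: using $H_1(SL_n(\mathbb{Z}))=0$ for $n\ge3$, it produces a surjection from the $SL_n(\mathbb{Z})$-coinvariants of $IA_n^{\mathrm{ab}}$ onto $H_1(T,\mathbb{Z})$, and one checks that these coinvariants vanish because $IA_n^{\mathrm{ab}}$ (known to be $\mathrm{Hom}(\mathbb{Z}^n,\wedge^2\mathbb{Z}^n)$) has no nonzero trivial $SL_n(\mathbb{Z})$-quotient for $n\ge3$. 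Either way, this is the homological content established in the proof of \cite[Lemma~4.1]{Farb&Franks}.

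Finally, the conjugacy assertions are elementary. Given ordered pairs $(i,j)$ and $(k,l)$ of distinct indices, choose $\tau\in S_n$ with $\tau(i)=k$ and $\tau(j)=l$, and let $\phi_\tau\in Aut(F_n)$ permute the generators by $\tau$; then $\phi_\tau A_{ij}\phi_\tau^{-1}=A_{kl}$ and $\phi_\tau B_{ij}\phi_\tau^{-1}=B_{kl}$. If $\tau$ is even, $\phi_\tau\in T$ and we are done; if $\tau$ is odd, pick $m\notin\{i,j\}$ (possible since $n\ge3$), let $\iota_m$ invert $x_m$ and fix the other generators, and set $\psi=\phi_\tau\iota_m$. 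Then $\det\pi(\psi)=(-1)(-1)=1$, so $\psi\in T$, and since $\iota_m$ commutes with $A_{ij}$ and with $B_{ij}$ (they move disjoint sets of generators) one still has $\psi A_{ij}\psi^{-1}=A_{kl}$ and $\psi B_{ij}\psi^{-1}=B_{kl}$. Hence all the $A_{ij}$ are mutually conjugate in $T$, and likewise all the $B_{ij}$. I expect the genuinely substantial step to be the homological input above — the finite generation of $IA_n$ and the vanishing of $H_1(T,\mathbb{Z})$ — which rest on the structure theory of $Aut(F_n)$; the conjugacy statement is essentially bookkeeping, the only wrinkle being to keep the conjugating elements inside $T$ when $n$ is small.
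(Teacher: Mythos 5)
Your construction is correct and is essentially the one behind the cited Lemma~4.1 of Farb--Franks (the paper itself gives no proof, only the citation): $T=\pi^{-1}(SL_n(\mathbb{Z}))$ is the special automorphism group, the $A_{ij},B_{ij}$ are the right and left Nielsen transvections, and the conjugacy statement is handled by (signed) permutation automorphisms, with your determinant correction by an inversion $\iota_m$, $m\notin\{i,j\}$, being exactly the right trick to keep the conjugator inside $T$. Your identities check out: with the conventions $(fg)(x)=f(g(x))$ and $[a,b]=aba^{-1}b^{-1}$ one indeed gets $A_{ij}B_{ij}^{-1}\colon x_i\mapsto x_j^{-1}x_ix_j$ and $A_{ij}[A_{ik},A_{kj}]\colon x_i\mapsto x_i[x_j,x_k]$. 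The one place where you diverge substantially is the vanishing of $H_1(T,\mathbb{Z})$: routing it through the five-term sequence and the identification $IA_n^{\mathrm{ab}}\cong\mathrm{Hom}(\mathbb{Z}^n,\wedge^2\mathbb{Z}^n)$ invokes a theorem (Kawazumi, Cohen--Pakianathan--Farb) that is far deeper than, and historically later than, what is needed here. The standard argument is a two-line consequence of facts you already have: for distinct $i,j,k$ (this is where $n\ge 3$ enters) a direct computation gives $[A_{ik},B_{kj}]=A_{ij}^{-1}$ and $[B_{ik},A_{kj}]=B_{ij}^{-1}$ in $Aut(F_n)$, so every generator of $T$ is a commutator of elements of $T$ and dies in $T^{\mathrm{ab}}$; hence $H_1(T,\mathbb{Z})=0$. (Equivalently, by your conjugacy statement $T^{\mathrm{ab}}$ is generated by the two common classes $a$ and $b$, and these relations force $a=b=1$.) Likewise, generation of $T$ by the transvections is usually quoted directly from Gersten's presentation of the special automorphism group rather than reassembled from Magnus's generators of $IA_n$, though your assembly is valid. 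What each approach buys: yours exhibits the kernel $IA_n$ explicitly and would generalize to computing $H_1$ of other subgroups sandwiched between $IA_n$ and $T$; the commutator-relation argument is elementary, self-contained, and is what makes the lemma ``follow directly'' from the structure of the Nielsen generators.
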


\begin{theorem}
For $n  \geq 6$, any $C^1$ action of $Aut(F_n)$ or $Out(F_n)$ on the circle factors through an action of $\mathbb{Z}/2\mathbb{Z}.$ 
\end{theorem}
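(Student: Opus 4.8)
The plan is to mimic the proof of Theorem~\ref{theorem:mcg}, using the lemma above to exhibit a large enough product inside a finite-index subgroup. First I would pass to the index-$2$ subgroup $T \leq Aut(F_n)$ supplied by the lemma, which is perfect (since $H_1(T,\mathbb Z)=0$). For $n \geq 6$ I would split the index set $\{1,\dots,n\}$ into two disjoint subsets $I_1$ and $I_2$, each of size at least $3$, and consider the subgroups $T_1$ and $T_2$ of $T$ generated by $\{A_{ij},B_{ij}\}$ with $i,j \in I_1$, respectively $i,j \in I_2$. These are the $Aut$-analogues of the mapping class groups of disjoint subsurfaces: generators with disjoint index sets commute in $Aut(F_n)$, so $T_1$ and $T_2$ commute, and each $T_k$ is a copy of (the perfect, $H_1$-trivial group) $T$ for a free group of rank $\geq 3$, hence $H_1(T_k,\mathbb Z)=0$. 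Thus $T$ contains a product $T_1 \times T_2$ of two finitely generated perfect groups.

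Next I would apply Theorem~\ref{theorem:main} to the induced $C^1$ action of $T_1 \times T_2$ on the circle (after noting, as in the proof of Theorem~\ref{theorem:main}, that a perfect group acts by orientation-preserving homeomorphisms). It follows that either $T_1$ or $T_2$ acts trivially. In either case some generator $A_{ij}$ (and some $B_{ij}$) acts trivially. Now I would invoke the conjugacy property from the lemma: all the $A_{ij}$ are conjugate to one another and all the $B_{ij}$ are conjugate to one another, so \emph{every} generator $A_{ij}$ and $B_{ij}$ acts trivially. Since $\{A_{ij},B_{ij}\}$ generates $T$, the whole of $T$ acts trivially. Because $T$ has index $2$ in $Aut(F_n)$, the action of $Aut(F_n)$ factors through $Aut(F_n)/T \cong \mathbb Z/2\mathbb Z$.

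For $Out(F_n)$ I would run the same argument, using that $Out(F_n)$ is a quotient of $Aut(F_n)$ and that the image $\overline T$ of $T$ in $Out(F_n)$ still has index dividing $2$, is perfect, and still contains commuting copies $\overline T_1,\overline T_2$ with trivial first homology generated by (conjugate) images of the $A_{ij}$ and $B_{ij}$; alternatively one observes that any $C^1$ action of $Out(F_n)$ pulls back to one of $Aut(F_n)$, so the conclusion for $Out(F_n)$ follows from that for $Aut(F_n)$ together with the fact that the kernel of $Aut(F_n) \to Out(F_n)$ (the inner automorphisms, generated by conjugations, which for $n\geq 3$ lie in $T$ up to the index-$2$ issue) is contained in the subgroup acting trivially. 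The main obstacle is bookkeeping rather than conceptual: one must make sure the splitting of the index set really does yield two \emph{commuting} subgroups each satisfying the hypotheses of Theorem~\ref{theorem:main} (in particular that the relevant relations among the $A_{ij},B_{ij}$ respect the partition), and that the index-$2$ orientation issue is handled uniformly for both $Aut(F_n)$ and $Out(F_n)$; once that is set up, the conjugacy-of-generators step does all the work, exactly as the conjugacy of Dehn twists did in Theorem~\ref{theorem:mcg}.
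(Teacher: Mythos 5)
Your proposal is correct and follows essentially the same route as the paper: the paper realizes your $T_1\times T_2$ as the index-$2$ subgroups of the commuting copies $Aut(F_3)\times Aut(F_{n-3})\leq Aut(F_n)$, applies Theorem~\ref{theorem:main}, and then uses the conjugacy of the generators $A_{ij}$, $B_{kl}$ exactly as you do, handling $Out(F_n)$ by pulling back along $Aut(F_n)\twoheadrightarrow Out(F_n)$.
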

\begin{proof}
Suppose $Aut(F_n)$, where $n \geq 6$, has a $C^1$ action on the circle. We may consider the induced action of $Aut(F_3) \times Aut(F_{n-3})$, which is a subgroup of $Aut(F_n)$, on the circle. Apply the lemma above to obtain an action of $H \times G$ on the circle, where $H$ and $G$ are the index 2 subgroups of $Aut(F_3)$ and $Aut(F_{n-3})$ respectively that play the role of the subgroup $T$ in the lemma above.

Since $H$ and $G$ are finitely generated and perfect, Theorem~\ref{theorem:main} implies that either $H$ acts trivially or $G$ acts trivially. In either case, some $A_{ij}$ and some $B_{kl}$ must act trivially. Now recall that $Aut(F_n)$ has an index two subgroup that is generated by elements conjugate to $A_{ij}$ and  $B_{kl}.$ So an index two subgroup of $Aut(F_n)$ acts trivially.

Since there exists a natural homomorphism from $Aut(F_n)$ onto $Out(F_n)$, the same result holds for $Out(F_n)$ also.
\end{proof}

Note that the result of Bridson and Vogtmann in \cite{Brid&Vogt} about $C^0$ actions implies this theorem.  However, their proof strongly relies on the existence of finite order elements. We give our proof here because it is short, easy, and it has implications for torsion free finite index subgroups of $Aut(F_n)$ and $Out(F_n).$ It should also be noted that Farb and Franks prove the same result for $C^2$ actions in \cite{Farb&Franks}.

\subsection{Kazhdan groups}

It is well known that discrete Kazhdan groups are finitely generated and any finite index normal subgroup of a discrete Kazhdan group has trivial first cohomology. So Theorem~\ref{theorem:main2} implies that products of discrete Kazhdan cannot have faithful $C^1$ actions on the circle.

\begin{corollary}
Let $H$ and $G$ be two discrete Kazhdan groups. Then for any $C^1$ action of $H \times G$ on the circle, the induced action of either $H \times 1$ or $1 \times G$ factors through an action of a finite group.
\end{corollary}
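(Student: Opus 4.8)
The plan is to deduce this corollary directly from Theorem~\ref{theorem:main2} by verifying that discrete Kazhdan groups satisfy its hypotheses. So the first step is to recall the two standard facts about property (T): a discrete group with property (T) is finitely generated, and every subgroup of finite index in a property (T) group again has property (T). The latter holds because property (T) is inherited by finite-index subgroups (a finite-index subgroup of a (T) group is (T); one can see this via induction of representations or via the co-compact lattice characterization). Consequently, if $H$ and $G$ are discrete Kazhdan groups, then every finite index normal subgroup $H_0 \le H$ and $G_0 \le G$ is itself a discrete Kazhdan group, hence finitely generated.

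The second step is the cohomological input: a finitely generated group $\Gamma$ with property (T) satisfies $H^1(\Gamma,\mathbb{R}) = 0$. This is the classical Delorme--Guichardet type statement — property (T) is equivalent to property (FH), every affine isometric action on a Hilbert space has a fixed point, which in particular forces $H^1(\Gamma,\pi)=0$ for the trivial representation $\pi = \mathbb{R}$; equivalently, the abelianization of a (T) group is finite, so $\mathrm{Hom}(\Gamma,\mathbb{R}) = H^1(\Gamma,\mathbb{R}) = 0$. Applying this to each finite index normal subgroup $H_0$ and $G_0$ (which we just observed are again (T)) gives $H^1(H_0,\mathbb{R}) = H^1(G_0,\mathbb{R}) = 0$ for all such subgroups. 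This is precisely the hypothesis of Theorem~\ref{theorem:main2}.

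The third and final step is simply to invoke Theorem~\ref{theorem:main2}: since $H$ and $G$ are finitely generated and all their finite index normal subgroups have vanishing first real cohomology, any $C^1$ action of $H \times G$ on the circle has the property that the induced action of $H \times 1$ or of $1 \times G$ factors through a finite group. In particular such an action cannot be faithful unless one factor is finite, which yields the ``products of Kazhdan groups cannot have faithful $C^1$ actions on the circle'' statement advertised in the text.

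I do not anticipate a genuine obstacle here — the corollary is a packaging of known facts. The only point requiring a little care is the stability of property (T) under passage to finite index subgroups, since the naive statement one wants (``finite index normal subgroup of (T) is (T)'') is the easy direction and is standard, but it should be cited (e.g.\ to Bekka--de la Harpe--Valette) rather than reproved. Everything else is a direct application of Theorem~\ref{theorem:main2}.
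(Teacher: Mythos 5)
Your proposal is correct and follows exactly the paper's route: the paper likewise notes that discrete Kazhdan groups are finitely generated and that their finite index normal subgroups have trivial first cohomology (property (T) passing to finite index subgroups), and then invokes Theorem~\ref{theorem:main2}. The extra detail you supply on the Delorme--Guichardet step is just an expansion of what the paper states as ``well known.''
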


This is related to a result of Navas about actions of groups with property (T) on the circle; in \cite{Navas} he proves that for $\alpha > 1/2$, any $C^{1 +\alpha}$ action of a discrete Kazhdan group on the circle factors through an action of a finite group.

Certain lattices like $SL(n,\mathbb{Z})$ for $n >2$ are known to have Kazhdan's property (T).  Also when $n>5$, $SL(n,\mathbb{Z})$ contains two commuting sub-lattices isomorphic to $SL(3,\mathbb{Z}). $ These facts yield the following corollary.
 
\begin{corollary}
Any $C^1$ action of a finite index subgroup of $SL(n,\mathbb{Z})$, for $n>5$, on the circle  factors through an action of a finite group.
\end{corollary}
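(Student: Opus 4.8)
The plan is to reduce the statement to the product corollary for Kazhdan groups by using the block structure of $SL(n,\mathbb{Z})$, and then to upgrade the conclusion to all of $\Gamma$ using the rigidity of higher--rank lattices. Fix a finite index subgroup $\Gamma \leq SL(n,\mathbb{Z})$ with $n > 5$ and a $C^1$ action of $\Gamma$ on the circle. Since $n > 5$, the group $SL(n,\mathbb{Z})$ contains two commuting subgroups $L_1$ and $L_2$, each isomorphic to $SL(3,\mathbb{Z})$, occupying disjoint diagonal blocks. Put $H = \Gamma \cap L_1$ and $G = \Gamma \cap L_2$. Because $\Gamma$ has finite index in $SL(n,\mathbb{Z})$, each of $H$, $G$ has finite index in the corresponding copy of $SL(3,\mathbb{Z})$; in particular both are finitely generated discrete Kazhdan groups (property (T) passes to finite index subgroups, so every finite index normal subgroup of $H$ or of $G$ is again a Kazhdan group and hence has vanishing first cohomology with real coefficients). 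Since $H$ commutes with $G$ and $L_1 \cap L_2 = \{1\}$, the subgroup they generate is $H \times G \leq \Gamma$, which inherits the $C^1$ action.

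Now apply Theorem~\ref{theorem:main2} (equivalently, its corollary for products of discrete Kazhdan groups) to this $C^1$ action of $H \times G$: the induced action of one factor, say $H$, factors through a finite group. Let $K_H$ be the kernel of $H \to \Homeo^+(S^1)$; it has finite index in $H$, hence in $SL(3,\mathbb{Z})$, so it is infinite, and it acts trivially on the circle. Let $K$ denote the kernel of the full $\Gamma$-action on $S^1$. Then $K$ is a normal subgroup of $\Gamma$ containing the infinite group $K_H$, so $K$ is infinite.

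To finish, I would invoke that $\Gamma$, being a finite index subgroup of $SL(n,\mathbb{Z})$ with $n \geq 3$, is an irreducible lattice in the higher--rank Lie group $SL(n,\mathbb{R})$. By Margulis's Normal Subgroup Theorem every normal subgroup of $\Gamma$ is finite or of finite index; since $K$ is infinite, $[\Gamma : K] < \infty$, and therefore the $\Gamma$-action on the circle factors through the finite group $\Gamma/K$, as claimed. (Alternatively, one can avoid Margulis's theorem here by using the congruence subgroup property: $K_H$ contains a principal congruence subgroup of $L_1$, hence a power $e_{ij}(m)$ of an elementary matrix inside the block, and $\Gamma$ contains a principal congruence subgroup of $SL(n,\mathbb{Z})$; conjugating $e_{ij}(m)$ by those elementary matrices shows that the normal closure of $K_H$ in $\Gamma$ already contains a relative elementary subgroup of finite index.)

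The only step that uses more than the machinery already developed in this paper is this last one: the fact that a finite index subgroup of a single $SL(3,\mathbb{Z})$ block acts trivially says nothing \emph{a priori} about the rest of $\Gamma$, and it is precisely the rigidity of higher--rank lattices (Margulis's theorem, or the congruence subgroup property for $SL_n(\mathbb{Z})$) that forces the kernel to be of finite index. Everything else is bookkeeping with finite index subgroups together with an appeal to the Kazhdan--product corollary of Theorem~\ref{theorem:main2}.
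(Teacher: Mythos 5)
Your proof is correct and follows the route the paper intends: embed $H \times G$ in $\Gamma$ with $H$, $G$ of finite index in two commuting block copies of $SL(3,\mathbb{Z})$, and apply the Kazhdan-product consequence of Theorem~\ref{theorem:main2}. The paper states this corollary without an explicit proof (``these facts yield\dots''), so your final step --- promoting the triviality of a finite index subgroup of one $SL(3,\mathbb{Z})$ block to a finite index kernel in all of $\Gamma$ via Margulis's normal subgroup theorem or the congruence subgroup property --- is precisely the detail the paper leaves implicit, and you are right to flag it as the one ingredient not already supplied by the paper's machinery.
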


This result is well known. It follows from the work of Morris (Witte) in \cite{Witte} (also see \cite{Ghys2} and \cite{Burger&Monod}).  The corollary above holds for a large family of lattices that contain two commuting sub-lattices that satisfy the hypothesis of Theorem~\ref{theorem:main2}.

The interested reader should also see \cite{BFS} and \cite{Navas2} for  results about actions of products of lattices and Kazhdan groups on the circle.

\end{document}